\documentclass[a4paper,11pt]{article}
\usepackage[nointlimits]{amsmath}
\usepackage{amssymb,amsmath,amsthm}
\usepackage{amsfonts}
\usepackage{latexsym}
\usepackage{graphicx}
\usepackage{color}
\usepackage{layout}
\usepackage[english]{babel}
\baselineskip=12pt

\textwidth=      15.5  cm
\textheight=     21.6  cm
\oddsidemargin=   0    cm
\parindent=       0    cm

\numberwithin{equation}{section} 

\def\ds{\displaystyle}

\def\ZZ{\mathbb Z}
\def\NN{\mathbb N}
\newcommand{\leg}[2]{\left({#1\over #2}\right)}

\newtheorem{thm}{Theorem}[section]
\newtheorem{lem}[thm]{Lemma}
\newtheorem{cor}[thm]{Corollary}

\setlength{\textwidth}{6.in}
\setlength{\oddsidemargin}{.1in}
\setlength{\topmargin}{-.2in}
\setlength{\textheight}{9.1in}

\begin{document}

\title{\bf Congruences involving the reciprocals \\ of central binomial coefficients}

\author{{\sc Roberto Tauraso}\\
Dipartimento di Matematica\\
Universit\`a di Roma ``Tor Vergata'', Italy\\
{\tt tauraso@mat.uniroma2.it}\\
{\tt http://www.mat.uniroma2.it/$\sim$tauraso}
}

\date{}
\maketitle
\begin{abstract}
\noindent  We present several congruences modulo a power of prime $p$
concerning sums of the following type $\sum_{k=1}^{p-1}{m^k\over k^r}{2k\choose k}^{-1}$
which reveal some interesting connections with the analogous infinite series. 
\end{abstract}

\section{Introduction}
In 1979, Ap\`ery \cite{Ap:79} proved that $\zeta(3)$ was irrational starting from the identity
$$\sum_{k=1}^{\infty}{(-1)^k\over k^3}{2k \choose k}^{-1}=-{2\over 5}\, \zeta(3).$$
He also noted that
$$\sum_{k=1}^{\infty}{1\over k^2}{2k \choose k}^{-1}={1\over 3}\, \zeta(2)$$
which has been known since the nineteenth century.
Here we would like to show that the following analogous congruences hold 
for any prime $p>5$
$$\sum_{k=1}^{p-1}{(-1)^k\over k^3}{2k \choose k}^{-1}\equiv-{2\over 5}\, {H(1)\over p^2}\quad \mbox{ and }\quad
\sum_{k=1}^{p-1}{1\over k^2}{2k \choose k}^{-1}\equiv{1\over 3}\, {H(1)\over p} \pmod{p^3}$$
where $H(1)=\sum_{k=1}^{p-1}{1\over k}$ (note that $H(1)\equiv 0$ (mod $p^2$) by Wolstenholme's theorem).

After some preliminary results, in the last section we will give the proofs of the above congruences together with the {\sl dual} ones:
$$\sum_{k=1}^{p-1}{(-1)^k\over k^2}{2k \choose k}={4\over 5}\,\left({H(1)\over p}+2p\,H(3)\right)
\quad \mbox{ and }\quad
\sum_{k=1}^{p-1}{1\over k}{2k \choose k}=-{8\over 3}\,H(1)
\pmod{p^4}.$$ 

\section{Old and new results concerning multiple harmonic sums}
We define the {\it multiple harmonic sum} as
$$H(a_1,a_2,\dots,a_r;n)=
\sum_{1\leq k_1<k_2<\dots<k_r\leq n}\; {1\over k_1^{a_1}k_2^{a_2}\cdots k_r^{a_r}}$$
where $n\geq r>0$ and $(a_1,a_2,\dots,a_r)\in (\NN^*)^r$.
The values of many harmonic sums modulo a power of prime $p$ are well known and usually
they are expressed as a combination of Bernoulli numbers $B_n$.
These are the results we need later (note that we write simply $H(a_1,a_2,\dots,a_r)$ when $n=p-1$):

\begin{enumerate} 

\item[(i).] (\cite{Sunzh:00})  for any prime $p>5$
\begin{align*}
&H(1)\equiv -{1\over 2}p\,H(2)\equiv
p^2\left(2{B_{p-3}\over p-3}-{B_{2p-4}\over 2p-4}\right)  &\pmod{p^4}\\
&H(3)\equiv -{3\over 2}p\,H(4)\equiv 6p^2\,{B_{p-5}\over p-5} &\pmod{p^3}\\
&H(5)\equiv 0 \pmod{p^2}\quad \mbox{and}\quad H\left(2;{p-1\over 2}\right)
\equiv -7\,{H(1)\over p} &\pmod{p^3}
\end{align*}

\item[(ii).] (\cite{Ho:07}, \cite{Zh:06}) for $a,b>0$ and for any prime $p>a+b+1$
\begin{align*}
&H(a,b)\equiv {(-1)^b\over a+b}{a+b\choose a} \,B_{p-a-b} 
\quad \mbox{and}\quad
H(1,1,2)\equiv 0 &\pmod{p}.
\end{align*}
\end{enumerate}

This is a generalization of Wolstenholme's theorem which improves the modulo $p^4$ congruence
given in Remark 5.1 in \cite{Sunzh:00}.

\begin{thm}\label{T21} For any prime $p>5$
$$H(1)\equiv-{1\over 2}p\,H(2)-{1\over 6}p^2\,H(3)\equiv
p^2\left({B_{3p-5}\over 3p-5}-3{B_{2p-4}\over 2p-4}+3{B_{p-3}\over p-3}\right)+p^4 {B_{p-5}\over p-5}
\pmod{p^5}.
$$
\end{thm}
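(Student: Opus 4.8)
The plan is to push everything modulo $p^5$ onto the two quantities $pH(2)$ and $p^2H(3)$ by means of the involution $k\mapsto p-k$, to dispose of $H(3)$ using~(i), and to treat $H(2)$ by a Faulhaber-type computation backed by a generalized Kummer congruence. Since $k\mapsto p-k$ permutes $\{1,\dots,p-1\}$, expanding $(p-k)^{-r}=(-1)^rk^{-r}(1-p/k)^{-r}$ as a $p$-adic series and summing over $k$ yields the reflection identities
$$2H(1)=-pH(2)-p^2H(3)-p^3H(4)-p^4H(5)-\cdots,\qquad 0=2pH(3)+3p^2H(4)+4p^3H(5)+\cdots,$$
each understood after truncation to a fixed power of $p$. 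Using $H(5)\equiv0\pmod{p^2}$ from~(i) and the fact that every $H(j)$ with $j\ge2$ is $p$-integral, the first identity becomes $2H(1)\equiv-pH(2)-p^2H(3)-p^3H(4)\pmod{p^5}$, while the second, after multiplication by $p/3$, gives $p^3H(4)\equiv-\tfrac23p^2H(3)\pmod{p^5}$. Eliminating $p^3H(4)$ produces $H(1)\equiv-\tfrac12pH(2)-\tfrac16p^2H(3)\pmod{p^5}$, the first congruence claimed.

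For the second congruence I would first substitute $H(3)\equiv6p^2\tfrac{B_{p-5}}{p-5}\pmod{p^3}$ from~(i), which turns $-\tfrac16p^2H(3)$ into $-p^4\tfrac{B_{p-5}}{p-5}\pmod{p^5}$, and then evaluate $H(2)$ modulo $p^4$. For the latter I would use $H(2)\equiv\sum_{k=1}^{p-1}k^M\pmod{p^4}$ with $M=(p-1)p^3-2$, apply $\sum_{k=1}^{p-1}k^M=\tfrac1{M+1}\sum_{j=0}^M\binom{M+1}{j}B_jp^{M+1-j}$, and keep only the terms that are nonzero modulo $p^4$: since $M$ and $M-2$ are even while $M-1$ and $M-3$ are odd, only $j=M$ and $j=M-2$ remain. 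Using $M\equiv-2\pmod{p^3}$, $M-2\equiv p-5\pmod{p-1}$, and the elementary Kummer congruence $\tfrac{B_{M-2}}{M-2}\equiv\tfrac{B_{p-5}}{p-5}\pmod p$, this collapses to $H(2)\equiv-2p\tfrac{B_M}{M}-4p^3\tfrac{B_{p-5}}{p-5}\pmod{p^4}$.

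The key step is to express $\tfrac{B_M}{M}$ modulo $p^3$ as the combination appearing in the theorem. Writing $\psi(j)=(1-p^{(p-1)j-3})\tfrac{B_{(p-1)j-2}}{(p-1)j-2}$, the indices $p-3$, $2p-4$, $3p-5$, $M$ correspond to $j=1,2,3,p^3$, all $\equiv-2\pmod{p-1}$, so the classical Kummer congruence makes the corresponding values congruent only modulo $p$. To reach modulo $p^3$ I would invoke the generalized Kummer congruence, i.e.\ the fact that $\psi$ extends to a $p$-adic analytic function of $j$ whose power-series coefficient of $j^n$ is divisible by $p^n$ for $1\le n\le p-2$ (most cleanly seen through the Kubota--Leopoldt $p$-adic $L$-function $L_p(s,\omega^{-2})$). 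Since $p>5$, this gives both $\psi(p^3)\equiv\psi(0)\pmod{p^3}$ and $3\psi(1)-3\psi(2)+\psi(3)\equiv\psi(0)\pmod{p^3}$, hence $\psi(p^3)\equiv3\psi(1)-3\psi(2)+\psi(3)\pmod{p^3}$; as the Euler factors $1-p^{(p-1)j-3}$ are $\equiv1\pmod{p^3}$ for $j=1,2,3,p^3$, this is exactly $\tfrac{B_M}{M}\equiv3\tfrac{B_{p-3}}{p-3}-3\tfrac{B_{2p-4}}{2p-4}+\tfrac{B_{3p-5}}{3p-5}\pmod{p^3}$.

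Putting the pieces together, $-\tfrac12pH(2)\equiv p^2\bigl(\tfrac{B_{3p-5}}{3p-5}-3\tfrac{B_{2p-4}}{2p-4}+3\tfrac{B_{p-3}}{p-3}\bigr)+2p^4\tfrac{B_{p-5}}{p-5}\pmod{p^5}$, and adding $-\tfrac16p^2H(3)\equiv-p^4\tfrac{B_{p-5}}{p-5}$ merges the two $B_{p-5}$-terms into $+p^4\tfrac{B_{p-5}}{p-5}$, yielding the second congruence. I expect the genuine difficulty to be concentrated in the previous paragraph: the ordinary Kummer congruence is too weak, so one must either cite or reprove its higher-order form (third finite differences divisible by $p^3$), and a little care is needed to control the tail of the expansion of $\psi$ — harmless precisely because $3<p-2$ when $p>5$. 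The reflection step and the Faulhaber bookkeeping (parities of $M,M-1,M-2$ and tracking powers of $p$) are, by contrast, entirely routine.
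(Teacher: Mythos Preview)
Your argument is correct, and it takes a genuinely different route from the paper's. For the first congruence the paper does \emph{not} use the involution $k\mapsto p-k$; instead it writes each $H(r)$ ($r=1,2,3$) as a power sum $\sum_{k=1}^{p-1}k^{m-r}$ with $m=\varphi(p^5)$, applies Faulhaber's formula, and chooses the coefficients $(\alpha_1,\alpha_2,\alpha_3)=(1,\tfrac12,\tfrac16)$ so that the surviving Bernoulli terms $p^2B_{m-2}$ and $p^4B_{m-4}$ cancel in the combination $\sum_r\alpha_r p^{r-1}H(r)$. Your reflection argument reaches the same relation $H(1)\equiv-\tfrac12pH(2)-\tfrac16p^2H(3)\pmod{p^5}$ more directly and without any Bernoulli numbers, which is a nice gain in elementarity. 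For the second congruence the paper simply invokes the mod $p^4$ formulas for $H(2)$ and $H(3)$ from Sun \cite{Sunzh:00}, whereas you rederive the Bernoulli expression from scratch via Faulhaber for $H(2)$ and the higher-order Kummer congruence (third forward differences of $(1-p^{n-1})B_n/n$ along the residue class $n\equiv -2\pmod{p-1}$ vanish mod $p^3$). Your identification $\psi(p^3)\equiv 3\psi(1)-3\psi(2)+\psi(3)\pmod{p^3}$ is exactly the statement that, modulo $p^3$, $\psi$ agrees with a quadratic polynomial in $j$ whose value at $j=p^3$ reduces to its constant term; this is correct and does not actually require the $p$-adic $L$-function interpretation of $\psi(0)$ (which, as written, is undefined in terms of Bernoulli numbers)---the finite-difference form of generalized Kummer for $j\ge1$ already suffices. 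The trade-off is that your proof is more self-contained on the Bernoulli side but imports a stronger Kummer-type input, while the paper's proof is shorter because it outsources that work to \cite{Sunzh:00}.
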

\begin{proof}  Let $m=\varphi(p^5)=p^4(p-1)$ then by Euler's theorem and by Faulhaber's formula
we have that for $r=1,2,3$
$$H(r)\equiv \sum_{k=1}^{p-1}k^{m-r}={B_{m-r+1}(p)-B_{m-r+1}\over m-r+1}
\pmod{p^5}.$$
Therefore
$$\sum_{r=1}^3 \alpha_r\, p^{r-1} H(r)\equiv
\sum_{r=1}^3 \alpha_r\,\sum_{k=r}^{m-1}{p^k B_{m-k}\over k-r+1}  {m-r\choose k-r}\pmod{p^5}.$$
Since $m$ is even then $B_{m-k}=0$ when $m-k>1$ and $k$ is odd.
Moreover $pB_{m-k}$ is $p$-integral, thus the sum modulo $p^5$ simplifies to
$$\sum_{r=1}^3 \alpha_r\, p^{r-1} H(r)\equiv
{p^2\over 2}(2\alpha_2-\alpha_1)B_{m-2}+
{p^4\over 4}(-6\alpha_3+4\alpha_2-\alpha_1)B_{m-4} \pmod{p^5}.$$
Hence the r.h.s. becomes zero as soon as we let $\alpha_1=1$, $\alpha_2={1\over 2}$, and $\alpha_3={1\over 6}$.
Finally we use the formulas for $H(2)$ and $H(3)$ modulo $p^4$ given in Remark 5.1 in \cite{Sunzh:00}.
\end{proof}

The next lemma allow us to expand two kinds of binomial coefficients 
as a combination of multiple harmonic sums.

\begin{lem}\label{L23} Let $n\in\NN^*$, then for $k=1,\dots,n-1$
\begin{align*}
&{n\choose k}={n\over k}{n-1\choose k-1}
=(-1)^{k-1}{n\over k}\sum_{j=0}^{k-1} (-n)^jH(\{1\}^j;k-1)\,,\\
&{n+k-1\choose k}={n\over k}{n+k-1\choose k-1}
={n\over k}\sum_{j=0}^{k-1} n^jH(\{1\}^j;k-1)
\end{align*}
\end{lem}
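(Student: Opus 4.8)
The plan is to reduce both identities to a single elementary observation: an explicit product formula for $\binom{n-1}{k-1}$ and $\binom{n+k-1}{k-1}$, followed by expanding that product over subsets of $\{1,\dots,k-1\}$.

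First I would dispose of the leftmost equalities, which are just the absorption identities $\binom{n}{k}=\frac{n}{k}\binom{n-1}{k-1}$ and $\binom{n+k-1}{k}=\frac{n}{k}\binom{n+k-1}{k-1}$, both immediate from the factorial definitions. So it suffices to prove
$$\binom{n-1}{k-1}=(-1)^{k-1}\sum_{j=0}^{k-1}(-n)^jH(\{1\}^j;k-1)\qquad\text{and}\qquad \binom{n+k-1}{k-1}=\sum_{j=0}^{k-1}n^jH(\{1\}^j;k-1),$$
where $H(\{1\}^0;k-1):=1$ and the case $k=1$ reads $1=1$ (empty product/sum).

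For the first formula I would write $\binom{n-1}{k-1}=\prod_{i=1}^{k-1}\frac{n-i}{i}$, then pull a factor $-i$ out of each numerator: $n-i=-i\,(1-n/i)$, so the $(k-1)!$ in the denominator cancels and $\binom{n-1}{k-1}=(-1)^{k-1}\prod_{i=1}^{k-1}\bigl(1-n/i\bigr)$. Expanding the product as a sum over subsets $S\subseteq\{1,\dots,k-1\}$ gives $\sum_{S}\prod_{i\in S}(-n/i)$, and grouping the subsets by their cardinality $j$ turns $\sum_{|S|=j}\prod_{i\in S}\frac1i$ into exactly $H(\{1\}^j;k-1)$ (the unordered sum over $j$ distinct indices equals the multiple harmonic sum over strictly increasing ones). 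This yields the claimed expansion after multiplying through by $n/k$. The second formula is the same computation with $n+i=i\,(1+n/i)$, so $\binom{n+k-1}{k-1}=\prod_{i=1}^{k-1}(1+n/i)=\sum_{j=0}^{k-1}n^jH(\{1\}^j;k-1)$, and again multiplying by $n/k$ finishes it.

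There is no real obstacle here; the one point requiring a word of care is bookkeeping the empty product when $k=1$ and the identification of the elementary-symmetric-type sum $\sum_{|S|=j}\prod_{i\in S}1/i$ with $H(\{1\}^j;k-1)$, which I would state explicitly rather than leave implicit.
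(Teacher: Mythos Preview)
Your proposal is correct and follows essentially the same route as the paper: rewrite $\binom{n-1}{k-1}$ and $\binom{n+k-1}{k-1}$ as the products $\prod_{i=1}^{k-1}(1\mp n/i)$ and expand to obtain the multiple harmonic sums, then multiply by $n/k$. The only difference is that you spell out the subset-expansion and the $k=1$ edge case more explicitly than the paper does.
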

\begin{proof} It suffices to use the definition of binomial coefficient:
$${n-1\choose k-1}={(n-1)\cdots (n-(k-1))\over (k-1)!}=(-1)^{k-1}\prod_{j=1}^{k-1}\left(1-{n\over j}\right)
=(-1)^{k-1}\sum_{j=0}^{k-1}(-n)^jH(\{1\}^j;k-1),$$
and
$${n+k-1\choose k-1}={(n+k-1)\cdots (n+1)\over (k-1)!}
=\prod_{j=1}^{k-1}\left(1+{n\over j}\right)
=\sum_{j=0}^{k-1} n^jH(\{1\}^j;k-1).$$
\end{proof}

By (ii), we already know that $H(1,2)\equiv -H(2,1)\equiv B_{p-3}\pmod{p}$. 
Moreover, since $H(1)H(2)=H(1,2)+H(2,1)+H(3)$ then $H(1,2)\equiv -H(2,1)$ mod $p^2$.
Thanks to an identity due to Hern\'andez \cite{He:99}, we are able to disentagle $H(1,2)$ and $H(2,1)$
and prove the following congruence modulo $p^2$.

\begin{thm}\label{T22} For any prime $p>3$
$$H(1,2)\equiv -H(2,1)\equiv -3\,{H(1)\over p^2}\pmod{p^2}.$$
\end{thm}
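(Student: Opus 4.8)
My plan is to prove the second congruence, $H(1,2)\equiv -3\,H(1)/p^2\pmod{p^2}$; the first one, $H(1,2)\equiv -H(2,1)$, is then immediate, since the stuffle relation $H(1)H(2)=H(1,2)+H(2,1)+H(3)$ together with $H(1)\equiv 0\pmod{p^2}$ and $H(3)\equiv 0\pmod{p^2}$ (both from (i)) forces $H(1,2)+H(2,1)\equiv 0\pmod{p^2}$, as already observed above. The engine is the identity of Hern\'andez \cite{He:99}, which I shall use in the form
$$\sum_{k=1}^{n}\binom{n}{k}\frac{(-1)^{k-1}}{k}\,H(1;k)=H(2;n)\qquad(n\geq 1).$$

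The idea is to read this identity at $n=p$ and then expand the binomial coefficient by Lemma \ref{L23}. Specializing to $n=p$ and isolating the term $k=p$ — which, $p$ being odd, equals $\frac1p H(1;p)=H(1)/p+1/p^2$, whereas $H(2;p)=H(2)+1/p^2$ — one obtains
$$\sum_{k=1}^{p-1}\binom{p}{k}\frac{(-1)^{k-1}}{k}\,H(1;k)=H(2)-\frac{H(1)}{p}.$$
By (i) we have $H(1)\equiv -\frac12 p\,H(2)\pmod{p^4}$, equivalently $H(2)\equiv -2H(1)/p\pmod{p^3}$, so the right-hand side is $\equiv -3\,H(1)/p\pmod{p^3}$; this is where the constant $3$ enters.

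On the other side, the first formula of Lemma \ref{L23} at $n=p$ gives $\binom{p}{k}=(-1)^{k-1}\frac{p}{k}\,(1-p\,H(1;k-1)+O(p^2))$, hence modulo $p^3$
$$\sum_{k=1}^{p-1}\binom{p}{k}\frac{(-1)^{k-1}}{k}\,H(1;k)\equiv p\sum_{k=1}^{p-1}\frac{H(1;k)}{k^2}-p^2\sum_{k=1}^{p-1}\frac{H(1;k)H(1;k-1)}{k^2}.$$
Using $H(1;k)=H(1;k-1)+1/k$ and exchanging the order of summation, one checks that the first sum equals $H(1,2)+H(3)$ and the second equals $H(2,2)+2H(1,1,2)+H(1,3)$. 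Since $H(3)\equiv 0\pmod{p^2}$ by (i), and $H(2,2)\equiv\frac32 B_{p-4}$, $H(1,3)\equiv -B_{p-4}$, $H(1,1,2)\equiv 0$ are all $\equiv 0\pmod p$ by (ii) (note $B_{p-4}=0$ because $p-4$ is odd), both correction sums disappear modulo $p^3$ and the left-hand side reduces to $p\,H(1,2)$. Comparing with the previous paragraph gives $p\,H(1,2)\equiv -3\,H(1)/p\pmod{p^3}$, and dividing through by $p$ yields $H(1,2)\equiv -3\,H(1)/p^2\pmod{p^2}$.

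The one delicate point is a loss of a power of $p$: after the expansion the identity is read modulo $p^3$ yet carries an overall factor $p$, so every auxiliary fact must be sharp one level beyond the target modulus $p^2$. This is exactly why the value of $H(1)$ modulo $p^4$ and of $H(3)$ modulo $p^2$ from (i) are needed rather than their cruder versions, and why the mere mod-$p$ vanishing of the weight-four sums $H(2,2)$, $H(1,3)$, $H(1,1,2)$ from (ii) suffices to kill the $p^2$-terms. All these congruences are quoted for $p>5$, which is the range in which the argument runs.
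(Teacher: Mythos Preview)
Your proof is correct and follows essentially the same route as the paper's: both rest on the Hern\'andez identity at $n=p$ (the paper writes it as a double sum over $i\le j$, you collapse the inner sum to $H(1;k)$), expand $\binom{p}{k}$ via Lemma~\ref{L23}, and reduce to $p\,H(1,2)\equiv H(2)-H(1)/p\pmod{p^3}$ after the weight-four sums $H(2,2),H(1,3),H(1,1,2)$ are killed by~(ii) and $H(3)$ by~(i). Your observation that the cited inputs from (i)--(ii) only cover $p>5$, leaving $p=5$ to a direct check, is apt; the paper's argument has the same tacit restriction.
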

\begin{proof} The following identity appears in \cite{He:99}: for $n\geq 1$
$$\sum_{k=1}^n {1\over k^2}=\sum_{1\leq i\leq j \leq n}{(-1)^{j-1}\over ij}{n\choose j}.$$
Letting $n=p$, by Lemma \ref{L23}, (i), and (ii) we obtain
\begin{eqnarray*}
H(2)&=&p\!\!\!\sum_{1\leq i\leq j \leq p-1}\!\!{(-1)^{j-1}\over ij^2}{p-1\choose j-1}+{H(1)\over p}
\equiv\sum_{1\leq i\leq j \leq p-1}\!\!{1-pH(1;j-1)\over ij^2}+{H(1)\over p}\\
&\equiv& pH(1,2)+pH(3)-p^2\!\!\sum_{1\leq i<j \leq p-1}\!\!{H(1;j-1)\over ij^2} -p^2H(1,3)+{H(1)\over p}\\
&\equiv& pH(1,2)+pH(3)-2p^2H(1,1,2)-p^2H(2,2) -p^2H(1,3)+{H(1)\over p} \pmod{p^3}.
\end{eqnarray*}
\end{proof}

Finally we get this extension of a result contained in \cite{Zh:07}.

\begin{thm}\label{T23} For any prime $p>5$
$${1\over 2}{2p\choose p}\equiv 1+2pH(1)+{2\over 3}p^3H(3)  \pmod{p^6}.$$
\end{thm}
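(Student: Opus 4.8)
The plan is to begin from the exact expansion
$$\frac12\binom{2p}{p}=\binom{2p-1}{p-1}=\prod_{j=1}^{p-1}\left(1+\frac{p}{j}\right)=\sum_{j=0}^{p-1}p^{j}\,H(\{1\}^{j}),$$
the last step being the one used in the proof of Lemma~\ref{L23} (here $H(\{1\}^{0})=1$ and $n=p-1$ throughout). Each $H(\{1\}^{j})$ is a sum of reciprocals of integers prime to $p$, hence $p$-integral, so every term with $j\ge 6$ vanishes modulo $p^{6}$ and only $j=0,1,\dots,5$ survive. It remains to evaluate those six terms modulo $p^{6}$.

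For this I would express each elementary symmetric sum $H(\{1\}^{j})$ in terms of the power sums $H(r)$ by Newton's identities --- for instance $H(\{1\}^{2})=\tfrac12(H(1)^{2}-H(2))$ and $H(\{1\}^{3})=\tfrac16(H(1)^{3}-3H(1)H(2)+2H(3))$, and similarly for $j=4,5$ --- and then discard the products that are too $p$-divisible to matter. The input is the list of $p$-adic orders for $p>5$ recorded in (i): $p^{2}\mid H(1),H(3),H(5)$ and $p\mid H(2),H(4)$. A short valuation count then shows that, modulo $p^{6}$, one has $p^{2}H(\{1\}^{2})\equiv-\tfrac12p^{2}H(2)$, $p^{3}H(\{1\}^{3})\equiv\tfrac13p^{3}H(3)$, $p^{4}H(\{1\}^{4})\equiv-\tfrac14p^{4}H(4)$ and $p^{5}H(\{1\}^{5})\equiv 0$, whence
$$\frac12\binom{2p}{p}\equiv 1+pH(1)-\frac12p^{2}H(2)+\frac13p^{3}H(3)-\frac14p^{4}H(4)\pmod{p^{6}}.$$

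It then remains to streamline this using the sharper congruences of Section~2. Theorem~\ref{T21} gives $H(1)\equiv-\tfrac12pH(2)-\tfrac16p^{2}H(3)\pmod{p^{5}}$, equivalently $-\tfrac12p^{2}H(2)\equiv pH(1)+\tfrac16p^{3}H(3)\pmod{p^{6}}$; and (i) gives $H(3)\equiv-\tfrac32pH(4)\pmod{p^{3}}$, equivalently $-\tfrac14p^{4}H(4)\equiv\tfrac16p^{3}H(3)\pmod{p^{6}}$. Substituting both into the previous display collapses the right-hand side to $1+2pH(1)+\bigl(\tfrac16+\tfrac13+\tfrac16\bigr)p^{3}H(3)=1+2pH(1)+\tfrac23p^{3}H(3)$, which is the assertion.

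The only delicate point is the valuation bookkeeping in the middle step: the $j=4$ contribution survives only through its $-\tfrac14p^{4}H(4)$ part and the $j=5$ contribution disappears entirely, precisely because of the refinements $p^{2}\mid H(1),H(3),H(5)$ and the mod $p^{5}$ form of $H(1)$ in Theorem~\ref{T21}; with only the crude bound $p\mid H(r)$ those terms would leak into the $p^{6}$ level. Beyond that, the proof is just Lemma~\ref{L23} together with the harmonic-sum estimates already in place.
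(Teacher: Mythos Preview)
Your proof is correct and follows essentially the same route as the paper's: expand the central binomial coefficient in the elementary symmetric sums $H(\{1\}^j)$, reduce these via Newton's identities to the power sums $H(r)$, and simplify using Theorem~\ref{T21} and the congruences in (i). The one cosmetic difference is that you expand $\binom{2p-1}{p-1}=\prod_{j}(1+p/j)$ in powers of $p$, whereas the paper applies Lemma~\ref{L23} to $\binom{2p}{p}$ with $n=2p$ and so works with an expansion in powers of $-2p$; the ensuing bookkeeping differs only in the numerical coefficients.
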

\begin{proof}
Since for $n\geq 1$ (see for example \cite{Ta:09})
\begin{eqnarray*}
&&2H(\{1\}^2;n)=H^2(1;n)-H(2;n)\\
&&6H(\{1\}^3;n)=H^3(1;n)-3H(1;n)H(2;n)+2H(3;n)\\
&&24H(\{1\}^4;n)=H^4(1;n)-6H^2(1;n)H(2;n)+8H(1;n)H(3;n)+3H^2(2;n)-6H(4;n)
\end{eqnarray*}
then by (i) and by Theorem \ref{T21} we obtain
$$\begin{array}{ll}
\ds H(\{1\}^2)\equiv -H(2)\equiv {H(1)\over p}+{1\over 6}pH(3)&\pmod{p^4}\\
\ds H(\{1\}^3)\equiv {1\over 3}H(3) &\pmod{p^3}\\
\ds H(\{1\}^4)\equiv -{1\over 4}H(4)\equiv {1\over 6}{H(3)\over p}&\pmod{p^2}.
\end{array}$$
Hence by Lemma \ref{L23}
\begin{eqnarray*}
{1\over 2}{2p\choose p}&\equiv & 1-2pH(1)+4p^2H(\{1\}^2)-8p^3H(\{1\}^3)+16p^4H(\{1\}^4)\\
&\equiv& 1-2pH(1)+4pH(1)+{2\over 3}p^3H(3)-{8\over 3}p^3H(3)+{8\over 3}p^3H(3)\\
&\equiv&1+2pH(1)+{2\over 3}p^3H(3) \pmod{p^6}.
\end{eqnarray*}
\end{proof}

\section{Some preliminary results}
We consider the Lucas sequences $\{u_n(x)\}_{n\geq 0}$ and $\{v_n(x)\}_{n\geq 0}$ 
defined by these recurrence relations
\begin{align*}
&u_0(x)=0, \;u_1(x)=1, \mbox{and}\; u_{n+1}=x\,u_n(x)-u_ {n-1}(x)\mbox{ for $n>0$,}\\
&v_0(x)=2, \;v_1(x)=x, \mbox{and}\; v_{n+1}=x\,v_n(x)-v_ {n-1}(x)\mbox{ for $n>0$.}
\end{align*}
The corresponding generating functions are
$$U_x(z)={z\over z^2-xz+1}\quad\mbox{and}\quad V_x(z)={2-xz\over z^2-xz+1}.$$

Now we consider two types of sums depending on an integral parameter $m$.
Note that the factor $p$ before the sum is needed because it cancels out the other factor
$p$ at the denominator of ${2k\choose k}^{-1}$ for $k=(p+1)/2,\dots,p-1$. 

\begin{thm}\label{T31} Let $m\in\ZZ$ then for any prime $p\not=2$, 
$$p\sum_{k=1}^{p-1} {m^k\over k}{2k \choose k}^{-1}\equiv {m\, u_p(2-m)-m^p\over 2} \pmod{p^2}.$$
and
$$p\sum_{k=1}^{p-1} {m^k\over k^2}{2k \choose k}^{-1}\equiv {2-v_p(2-m)-m^p\over 2p} \pmod{p^2}.$$
\end{thm}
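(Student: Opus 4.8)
The plan is to reduce both congruences to combinatorial identities for $\sum_{k=1}^{n-1}\binom{2k}{k}^{-1}m^k/k$ and $\sum_{k=1}^{n-1}\binom{2k}{k}^{-1}m^k/k^2$ valid over $\ZZ$ (or over $\ZZ[1/2]$), and then specialize $n=p$. First I would recall the classical generating-function identity
$$\sum_{k=1}^{\infty}\frac{1}{\binom{2k}{k}}\,\frac{(4t)^k}{k}=\frac{2\sqrt{t}\arcsin\sqrt t}{\sqrt{1-t}},$$
together with its integrated companion for the $k^{-2}$ sum; the key point is that both of these, after the substitution that turns $\arcsin$ into the Lucas-sequence parametrization, become \emph{rational} in the relevant variable once one truncates modulo the right power. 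Concretely, one knows identities of the shape
$$\sum_{k=1}^{n}\frac{m^k}{k\binom{2k}{k}}=\frac{1}{?}\bigl(\text{telescoping piece involving }u_k(2-m)\bigr),\qquad
\sum_{k=1}^{n}\frac{m^k}{k^2\binom{2k}{k}}=\frac{1}{?}\bigl(\text{piece involving }v_k(2-m)\bigr),$$
which I would verify by induction on $n$ using the recurrence $u_{n+1}=(2-m)u_n-u_{n-1}$ and the defining relation $\binom{2k}{k}=\binom{2k-2}{k-1}\cdot\frac{2(2k-1)}{k}$. The cleanest route is probably to check that $a_n:=\sum_{k=1}^{n-1}m^k\binom{2k}{k}^{-1}/k$ and the candidate closed form built from $u_p$, $m^p$ satisfy the same first-order recurrence in $n$ and agree at $n=1$.

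Second, once the finite identity is in hand, I would set $n=p$ and analyze the error terms. The subtlety flagged in the paragraph preceding the theorem is that $\binom{2k}{k}^{-1}$ acquires a factor $p$ in the denominator for $(p+1)/2\le k\le p-1$, which is exactly why the statement carries the prefactor $p$; so $p\binom{2k}{k}^{-1}$ is $p$-integral for all $k=1,\dots,p-1$, and moreover it is divisible by $p^2$ for $k$ in the upper range (since there $\binom{2k}{k}=\binom{p}{\,2k-p\,}\cdot(\text{unit})\cdot\frac{\text{stuff}}{p}$ contributes one factor of $p$ and we want to see the remaining contribution). For $1\le k\le (p-1)/2$ the quantity $p/\binom{2k}{k}\equiv 0\pmod p$ trivially. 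This means that modulo $p^2$ the whole left-hand side collapses to a short, explicitly controllable piece, and the right-hand side is whatever the closed form evaluates to.

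Third, I would evaluate $u_p(2-m)$, $v_p(2-m)$, and $m^p$ modulo the needed power of $p$. Here one uses the standard Lucas-sequence congruences: $u_p(x)\equiv\leg{x^2-4}{p}\pmod p$ and, more precisely, the expansion of $u_p$ and $v_p$ in terms of $x$ via the explicit polynomial formulas $u_p(x)=\sum_j (-1)^j\binom{p-1-j}{j}x^{p-1-2j}$ and the analogous one for $v_p$, reducing binomial coefficients mod $p$ (most terms vanish because $p\mid\binom{p-1-j}{j}$ unless $j$ is small). The main obstacle I anticipate is precisely this bookkeeping for the second congruence: the factor $1/(2p)$ in front of $2-v_p(2-m)-m^p$ means I need $v_p(2-m)+m^p\equiv 2\pmod{p}$ for the expression to even be $p$-integral, and then I need the next-order term $\pmod{p^3}$ in $v_p$ and in $m^p=(m)^p$ (Fermat) to pin down the answer $\pmod{p^2}$. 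Matching this against the left-hand side, where I must also extract the $O(p)$ contribution of $\sum_{k=(p+1)/2}^{p-1}$, is the delicate step; everything else is the induction of step one plus routine $p$-adic estimates. I would organize the write-up as: (a) state and prove the two finite identities by induction; (b) observe the $p$-divisibility of $p\binom{2k}{k}^{-1}$ by ranges of $k$; (c) plug in $n=p$ and reduce the Lucas terms mod $p^2$ (resp.\ mod $p^3$ before dividing by $2p$); (d) reconcile the two sides.
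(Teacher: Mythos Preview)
Your plan has a genuine gap at step~(a): there is no closed-form identity for the bare partial sum $\sum_{k=1}^{n}\frac{m^k}{k}\binom{2k}{k}^{-1}$ (or its $k^{-2}$ analogue) in terms of the Lucas sequences $u_n(2-m)$, $v_n(2-m)$. The infinite series is an $\arcsin$-type transcendental function, and truncating it does not produce a rational expression in $u_n$, $v_n$; the induction you sketch cannot close because $a_n-a_{n-1}=\frac{m^n}{n}\binom{2n}{n}^{-1}$ bears no simple relation to $u_{n+1}-(2-m)u_n+u_{n-1}=0$. So step~(a) will not go through, and steps (b)--(d) then have nothing to stand on.

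The paper's device is exactly to \emph{create} such a closed form by inserting extra factors. One checks the elementary equality
\[
\binom{n}{k}\binom{n+k-1}{k-1}\binom{2k}{k}^{-1}=\tfrac12\binom{n+k-1}{2k-1},
\]
and the right-hand side does have a Lucas generating function: $\sum_{k\ge1}\binom{n+k-1}{2k-1}(-m)^k=-m\,u_n(2-m)$ (via $[z^n]$ of $1/(1+mz/(1-z)^2)$), and similarly $\sum_{k\ge0}\binom{n+k}{2k}(-m)^k$ picks out $v_n(2-m)$. Setting $n=p$ and peeling off the $k=p$ term gives the exact value of
\[
\sum_{k=1}^{p-1}\binom{p}{k}\binom{p+k-1}{k-1}\binom{2k}{k}^{-1}(-m)^k
\]
in terms of $u_p(2-m)$ and $m^p$. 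The congruence then comes from Lemma~\ref{L23}: $\binom{p}{k}\equiv(-1)^{k-1}\tfrac{p}{k}\bigl(1-pH(1;k-1)\bigr)$ and $\binom{p+k-1}{k-1}\equiv 1+pH(1;k-1)$, whose product is $(-1)^{k-1}\tfrac{p}{k}\bigl(1-p^2H(1;k-1)^2\bigr)$, so that modulo $p^2$ the summand collapses to $-\tfrac{p\,m^k}{k}\binom{2k}{k}^{-1}$. That is the whole proof; the miracle is that the inserted factors simultaneously (i) make the finite sum exactly a Lucas number and (ii) reduce to $\pm p/k$ modulo $p^2$.

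Two further remarks on your outline. Your step~(c)/(d) --- reducing $u_p(2-m)$ and $v_p(2-m)$ modulo powers of $p$ via the Legendre symbol and binomial expansions --- is unnecessary here: the statement of the theorem leaves $u_p$ and $v_p$ intact on the right-hand side, and no further reduction is asked for (that happens only in the subsequent corollary for special $m$). And the observation in~(b) that $p\binom{2k}{k}^{-1}\equiv0\pmod{p}$ for $k\le(p-1)/2$ is correct but does not by itself control the sum modulo $p^2$; the upper range $k>(p-1)/2$ contributes at order $p^0$ after the prefactor, so one really needs an exact identity, not just $p$-adic estimates, to land on the Lucas value.
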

\begin{proof} Let $f(z)=1/(1+mz)$ then
\begin{eqnarray*}
\sum_{k=1}^n{n\choose k}{n-1+k \choose k-1}{2k \choose k}^{-1}(-m)^k \!\!\!&=&
\sum_{k=1}^n{n+k-1 \choose 2k-1}(-m)^k=[z^n]f\left(z\over (1-z)^2\right)\\
&=&-{m\over 2}[z^n]U_{2-m}(z).
\end{eqnarray*}
Let $n=p$, then
$$\sum_{k=1}^{p-1}{p\choose k}{p-1+k \choose k-1}{2k \choose k}^{-1}(-m)^k=-{mu_p(2-m)-m^p\over 2}.$$
On the other hand, by Lemma \ref{L23} the l.h.s. is congruent modulo $p^2$ to
$$
-p\sum_{k=1}^{p-1}{m^k\over k}\left(1-pH(1;k-1)\right)\left(1+ p H(1;k-1)\right){2k \choose k}^{-1}
\equiv
-p\sum_{k=1}^{p-1}{m^k\over k}{2k \choose k}^{-1}\pmod{p^2}.
$$
As regards the second congruence, consider 
\begin{eqnarray*}
\sum_{k=0}^n{n\choose k}{n-1+k \choose k}{2k \choose k}^{-1}(-m)^k \!\!\!&=&
\sum_{k=0}^n\left({n+k \choose 2k}-{1\over 2}{n+k-1 \choose 2k-1}\right)(-m)^k\\
&=&[z^n]{1\over 1-z}f\left(z\over (1-z)^2\right)-{1\over 2}f\left(z\over (1-z)^2\right)\\
&=&{1\over 2}[z^n]V_{2-m}(z).
\end{eqnarray*} 
Let $n=p$, then
$${1\over p}\sum_{k=1}^{p-1}{p\choose k}{p-1+k \choose k}{2k \choose k}^{-1}(-m)^k=-{2-v_p(2-m)-m^p\over 2p}.$$
As before, by Lemma \ref{L23} the l.h.s. is congruent modulo $p^2$ to
$$
-p\sum_{k=1}^{p-1}{m^k\over k^2}\left(1-pH(1;k-1)\right)\left(1+ p H(1;k-1)\right){2k \choose k}^{-1}
\equiv
-p\sum_{k=1}^{p-1}{m^k\over k^2}{2k \choose k}^{-1}\pmod{p^2}.
$$
\end{proof}

For some values of the parameter $x=2-m$, Lucas sequences have specific names.
Here it is a short list of examples which follow straightforwardly from 
the previous theorem. 

\begin{cor}\label{C32} For any prime $p\not=2$, the following congruences hold modulo $p^2$
$$\begin{array}{lll}
\ds p\sum_{k=1}^{p-1}{1\over k^2}{2k \choose k}^{-1}\equiv {\delta_{p,3}\over 2}
&\hspace{10mm}&\ds p\sum_{k=1}^{p-1}{1\over k}{2k \choose k}^{-1}\equiv {\leg{p}{3}-1\over 2} \\
\ds p\sum_{k=1}^{p-1}{(-1)^k\over k^2}{2k \choose k}^{-1}\equiv {1-L_{p}^2\over 2p} 
&&\ds p\sum_{k=1}^{p-1}{(-1)^k\over k}{2k \choose k}^{-1}\equiv {1-L_pF_{p}\over 2}\\
\ds p\sum_{k=1}^{p-1}{2^k\over k^2}{2k \choose k}^{-1}\equiv -q_p(2)
&&\ds p\sum_{k=1}^{p-1}{2^k\over k}{2k \choose k}^{-1}\equiv \leg{-1}{p}-1-pq_p(2)
\end{array}
$$
where $\delta_{n,k}=1$ if $n=k$ and it is $0$ otherwise, $F_n$ is the $n$-th Fibonacci number,
$L_n$ is the $n$-th Lucas number,
$\leg{\cdot}{p}$ the Legendre symbol and $q_p(a)=(a^{p-1}-1)/p$ the Fermat quotient.
\end{cor}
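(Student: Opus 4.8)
The plan is to specialize Theorem~\ref{T31} to the three parameter values $m=1$, $m=-1$, and $m=2$, which correspond to $x=2-m$ equal to $1$, $3$, and $0$ respectively, and then to evaluate the two quantities $u_p(x)$, $v_p(x)$ together with the power $m^p$ in each case; the six congruences then drop out by direct substitution into the two formulas of Theorem~\ref{T31}. For the power I would simply use $1^p=1$, $(-1)^p=-1$, and $2^p=2^{p-1}\cdot 2=2+2p\,q_p(2)$ from the definition $q_p(2)=(2^{p-1}-1)/p$.

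For $m=2$ (so $x=0$) the Lucas sequences are purely periodic of period $4$: from $u_0=0,u_1=1$ and $v_0=2,v_1=0$ one gets $u_n(0)=0,1,0,-1,\dots$ and $v_n(0)=2,0,-2,0,\dots$, so for an odd prime $p$ one has $u_p(0)=(-1)^{(p-1)/2}=\leg{-1}{p}$ and $v_p(0)=0$; plugging these and $2^p=2+2p\,q_p(2)$ into Theorem~\ref{T31} yields the bottom row. For $m=1$ (so $x=1$) the sequences are purely periodic of period $6$; since every prime $p>3$ is $\equiv\pm1\pmod 6$, one checks $u_p(1)=\leg{p}{3}$ and $v_p(1)=1$, and with $m^p=1$ Theorem~\ref{T31} gives $p\sum_{k}1/k\,{2k\choose k}^{-1}\equiv(\leg{p}{3}-1)/2$ and $p\sum_{k}1/k^2\,{2k\choose k}^{-1}\equiv 0$. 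The prime $p=3$ must be inspected separately: here $u_3(1)=0=\leg{3}{3}$, so the first congruence is unaffected, but $v_3(1)=-2$, and $(2-(-2)-1)/(2\cdot 3)=1/2$, which is exactly the term $\delta_{p,3}/2$ recorded in the statement.

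For $m=-1$ (so $x=3$) I would use the classical identifications $u_n(3)=F_{2n}$ and $v_n(3)=L_{2n}$, which follow by matching the recurrence $a_{n+1}=3a_n-a_{n-1}$ and the initial data with the even-indexed Fibonacci and Lucas numbers. Taking $n=p$ and $m^p=-1$, Theorem~\ref{T31} gives $p\sum_{k}(-1)^k/k\,{2k\choose k}^{-1}\equiv(1-F_{2p})/2$ and $p\sum_{k}(-1)^k/k^2\,{2k\choose k}^{-1}\equiv(3-L_{2p})/(2p)$; the stated forms then follow from the doubling identities $F_{2p}=F_pL_p$ and $L_{2p}=L_p^2-2(-1)^p=L_p^2+2$. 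There is no genuine obstacle here---everything is a substitution into Theorem~\ref{T31}---so the only points that deserve a little care are the verification $u_p(1)=\leg{p}{3}$ (which rests on $p\not\equiv 0\pmod 3$), the separate treatment of $p=3$ that produces the Kronecker-delta term, and quoting the Fibonacci--Lucas doubling formulas in the correct normalization.
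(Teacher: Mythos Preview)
Your proposal is correct and follows essentially the same route as the paper: specialize Theorem~\ref{T31} at $m=1,-1,2$, identify $u_p(2-m)$ and $v_p(2-m)$ with the appropriate Legendre-symbol values, Fibonacci/Lucas numbers (via $u_n(3)=F_{2n}$, $v_n(3)=L_{2n}$ and the doubling formulas), and periodic values at $x=0,1$, and substitute. Your explicit separate treatment of $p=3$ and the expansion $2^p=2+2p\,q_p(2)$ are exactly what the paper's terse proof is implicitly doing; the only small addition in the paper is the parenthetical remark that $L_p\equiv 1\pmod p$, which ensures $(1-L_p^2)/(2p)$ is $p$-integral.
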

\begin{proof}  For $m=1$, $u_p(2-m)=\leg{p}{3}$ and $v_p(2-m)=1-3\delta_{p,3}$.
For $m=-1$, $u_p(2-m)=F_{2p}=L_p F_p$ and $v_p(2-m)=L_{2p}=L_p^2+2$
(note that $L_p\equiv 1$ and $F_p\equiv \leg{p}{5}$ mod $p$).
For $m=2$, $u_p(2-m)=\leg{-1}{p}$ and $v_p(2-m)=0$.
\end{proof}

The interested reader can compare some of the previous formulas with
the corresponding values of the infinite series (when they converge).
For example (see \cite{Le:85}) notice the {\sl golden ratio} in
$$\sum_{k=1}^{\infty}{(-1)^k\over k}{2k \choose k}^{-1}=-{2\over \sqrt{5}}\log\left({1+\sqrt{5}}\over 2\right)$$
while the analogous congruence involves Fibonacci and Lucas numbers.

By differentiating the generating functions employed in the proof of Theorem \ref{T31} one can obtain
more congruences concerning
$$p\sum_{k=1}^{p-1}{Q(k)\,m^k\over k^2}{2k \choose k}^{-1}$$
where $Q(k)$ is a polynomial. For example it is not hard to show that for any prime $p>3$
$$p\sum_{k=1}^{p-1}{1\over C_k}\equiv {2\over 3} \leg{p}{3} \pmod{p^2}$$
where $C_k={2k\choose k}/(k+1)$ is the $k$-th Catalan number.

The problem to raise the power of $k$ at the denominator seems to be much more difficult since we should
try to integrate those generating functions. In the next section we will see a remarkable example where
the power of $k$ is $3$.

Moreover the congruences established in Theorem \ref{T31} are similar to
other congruences involving the central binomial coefficients (not inverted) obtained in our joint-work
with Zhi-Wei Sun \cite{SuzwTa:09}. The next theorem gives a first explanation of this behaviour,
but it's our opinion that this relationship should be investigated further in future studies.

\begin{thm}\label{T33} Let $m,r\in\ZZ$ then for any prime $p\not 2$ such that $p$ does not divide $m$ then
$$p\sum_{k=1}^{p-1} {m^k\over k^{r}} {2k\choose k}^{-1}
\equiv {m\,(-1)^{r-1}\over 2}\sum_{k=1}^{p-1} {1\over m^{k} k^{r-1}} {2k\choose k} \pmod{p}.$$
\end{thm}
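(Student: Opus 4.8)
The plan is to establish the claimed mod-$p$ identity by relating each side to a sum over $k$ of essentially the same binomial core, but ``reflected'' via the substitution $k \mapsto p-k$. First I would recall the elementary congruence $\binom{2k}{k}^{-1}$ for $k$ in the range $(p+1)/2 \le k \le p-1$: here $\binom{2k}{k}$ carries exactly one factor of $p$ (since $p < 2k < 2p$), so $p\binom{2k}{k}^{-1}$ is $p$-integral, which is precisely why the prefactor $p$ appears. For $1 \le k \le (p-1)/2$ the term $\binom{2k}{k}^{-1}$ is already $p$-integral and $p\binom{2k}{k}^{-1} \equiv 0 \pmod p$, so modulo $p$ the left-hand sum is supported on $k = (p+1)/2, \dots, p-1$. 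Writing $k = p-j$ with $1 \le j \le (p-1)/2$, I would compute $\binom{2(p-j)}{p-j}$ in terms of $j$: one gets $\binom{2p-2j}{p-j}$, and a standard manipulation (pulling out the single factor $p$ and using $\binom{2p-2j}{p-j} = \frac{(2p-2j)!}{((p-j)!)^2}$, then reducing the remaining factorials mod $p$) should yield something of the shape $p\binom{2(p-j)}{p-j}^{-1} \equiv c_j \binom{2j}{j} \pmod p$ for an explicit constant $c_j$ — I expect $c_j$ to be, up to sign and a power of $2$ or $j$, simply $\pm 1$ or $\pm\frac{1}{2}\binom{2j}{j}$-type data, i.e. the reflection essentially inverts the central binomial coefficient.

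Next I would assemble the pieces. Substituting $k = p - j$ into $\dfrac{m^k}{k^r}$ gives $\dfrac{m^{p-j}}{(p-j)^r} \equiv \dfrac{m^{p}m^{-j}}{(-j)^r} \equiv m \cdot \dfrac{(-1)^r}{m^j j^r} \pmod p$ using Fermat's little theorem $m^p \equiv m$ (valid since $p \nmid m$). Combining this with the reflection formula for $p\binom{2k}{k}^{-1}$ from the previous step, the left-hand side becomes $\dfrac{m(-1)^r}{?}\sum_{j=1}^{(p-1)/2} \dfrac{1}{m^j j^r}\binom{2j}{j}\cdot(\text{stuff})$, and the one remaining power of $j$ in the denominator should drop from $r$ to $r-1$ because the reflection identity for the binomial contributes a factor of $j$ (this is the natural mechanism by which the exponent decreases, matching the statement). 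I would then need to extend the resulting half-range sum $\sum_{j=1}^{(p-1)/2}$ to the full range $\sum_{j=1}^{p-1}$: the terms with $(p+1)/2 \le j \le p-1$ have $\binom{2j}{j} \equiv 0 \pmod p$ (again one factor of $p$ in the numerator), so adding them in changes nothing mod $p$. This gives exactly $\dfrac{m(-1)^{r-1}}{2}\sum_{j=1}^{p-1}\dfrac{1}{m^j j^{r-1}}\binom{2j}{j}$, as desired.

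The main obstacle I anticipate is pinning down the reflection constant $c_j$ precisely and tracking the sign $(-1)^{r-1}$ and the factor $\tfrac12$ correctly — in particular making sure the single factor of $j$ that converts $j^r$ to $j^{r-1}$ is accounted for exactly once and with the right sign. Concretely, the delicate computation is $\binom{2(p-j)}{p-j} \bmod p^2$ (or at least enough of it to control $p\binom{2(p-j)}{p-j}^{-1} \bmod p$): one writes $(2p-2j)! = (2p-2j)(2p-2j-1)\cdots(p+1)\cdot p \cdot (p-1)!$, isolates the lone $p$, and reduces each of the $\sim p$ remaining consecutive factors modulo $p$, and similarly for $(p-j)!^2$; Wilson's theorem $(p-1)! \equiv -1$ and careful bookkeeping of the truncated products $\prod_{i=1}^{j}(p-i) \equiv (-1)^j j!$, $\prod_{i=1}^{j-1}(p - j - i)$, etc., should collapse everything. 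I would also double-check two small cases — say $r = 2$ with $m = -1$ (comparing against Corollary \ref{C32}, which gives $p\sum (-1)^k/k^2 \binom{2k}{k}^{-1} \equiv (1 - L_p^2)/(2p)$ against $-\tfrac12\sum (-1)^k/k \binom{2k}{k}$) and $r = 1$, $m = 2$ — to confirm the constants before committing to the general argument.
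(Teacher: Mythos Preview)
Your proposal is correct and follows essentially the same route as the paper. The paper packages the reflection as the clean identity
\[
\frac{p}{k}\binom{2k}{k}^{-1}\equiv \frac{1}{2}\binom{2(p-k)}{p-k}\pmod p \qquad (1\le k\le p-1),
\]
proved (as you anticipate) by reducing the factorials modulo $p$ on each side to the common value $(-1)^k\binom{k-1}{p-k-1}$ for $k>(p-1)/2$, and noting both sides vanish for $k\le (p-1)/2$; the factor $1/k$ absorbed on the left is exactly your ``missing'' $c_j=-j/2$, and the rest --- substituting $k\mapsto p-k$, invoking $m^{p}\equiv m$, and using that the extra half-range contributes $0$ --- is identical to your plan.
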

\begin{proof}
We first show that for $k=1,\dots,p-1$:
$${p\over k}{2k\choose k}^{-1}\equiv {1\over 2}{2(p-k)\choose p-k}\pmod{p}.$$
This is trivial for  $k=1,\dots,(p-1)/2$, because the l.h.s and the r.h.s have a factor $p$ at the numerator
and therefore they are both zero modulo $p$.

Assume now that $k=(p+1)/2,\dots,p-1$:
\begin{eqnarray*}
{p\over k}{2k\choose k}^{-1}&=&
{(k-1)!\over (p+(2k-p))\cdots (p+1)(p-1)\cdots(p-(p-k-1))}\\
&\equiv& {(k-1)! (-1)^k \over (2k-p)!(p-k-1)!}=(-1)^k{k-1\choose p-k-1}\pmod{p}.
\end{eqnarray*}
On the other hand 
\begin{eqnarray*}
{1\over 2}{2(p-k)\choose p-k}&=&{2p-2k-1\choose p-k-1}=
{(p-(2k+1-p))\cdots (p-(k-1))\over (p-k-1)!}\\
&\equiv&{(-1)^k (k-1)\cdots (2k+1-p))\over (p-k-1)!}=(-1)^k{k-1\choose p-k-1}\pmod{p}.
\end{eqnarray*}
By summing over $k$ and by Euler's theorem we get
\begin{eqnarray*}
p\sum_{k=1}^{p-1} {m^k\over k^r} {2k\choose k}^{-1}
&\equiv& {1\over 2}\sum_{k=1}^{p-1} {m^k\over k^{r-1}} {2(p-k)\choose p-k}=
{1\over 2}\sum_{k=1}^{p-1} {m^{p-k}\over (p-k)^{r-1}} {2k\choose k}\\
&\equiv&{m\,(-1)^{r-1}\over 2}\sum_{k=1}^{p-1} {1\over m^{k}k^{r-1}} {2k\choose k} \pmod{p}.
\end{eqnarray*}
\end{proof}

\section{Proof of the main result and a conjecture}
We finally prove the congruences announced in the introduction.

\begin{thm}\label{T41} For any prime $p>5$
$$\sum_{k=1}^{p-1} {1\over k^2}{2k \choose k}^{-1}\equiv {1\over 3}{H(1)\over p} \pmod{p^3}.$$
and
$$\sum_{k=1}^{p-1} {(-1)^k\over k^3}{2k \choose k}^{-1}\equiv -{2\over 5}{H(1)\over p^2} \pmod{p^3}.$$
\end{thm}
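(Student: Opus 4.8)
The plan is to treat the two congruences via a common mechanism: express the reciprocal central binomial sums through the ``non-inverted'' sums of Corollary~\ref{C32} and Theorem~\ref{T31}, and then extract the extra power of $p$ by refining those arguments one order further. For the first congruence, I would start from the second identity in Theorem~\ref{T31} with $m=1$, which relates $p\sum_{k=1}^{p-1}k^{-2}{2k\choose k}^{-1}$ to $2-v_p(1)-1$ modulo $p^2$. Since $v_p(1)=1-3\delta_{p,3}$ this only gives the leading term $0$ modulo $p^2$; to reach $p^3$ I must carry the expansion of ${p\choose k}{p-1+k\choose k}{2k\choose k}^{-1}$ from Lemma~\ref{L23} to one more order in $p$, so that the cross terms $p^2H(1;k-1)^2$ and the $p^2H(2;k-1)$ contributions survive. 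This produces, besides $p\sum k^{-2}{2k\choose k}^{-1}$, correction sums of the shape $p^2\sum_k k^{-2}H(2;k-1){2k\choose k}^{-1}$ and $p^2\sum_k k^{-3}H(1;k-1){2k\choose k}^{-1}$; modulo $p^3$ these are governed by their values modulo $p$, which by Theorem~\ref{T33} reduce to ordinary multiple harmonic sums $H(1,2)$, $H(2,1)$, $H(3)$ evaluated via (i), (ii) and Theorem~\ref{T22}. Assembling the pieces and using $H(1)\equiv-\tfrac12pH(2)$ should collapse everything to $\tfrac13 H(1)/p$.

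For the second congruence I would run the same scheme but now integrating the generating function one extra time to bring in the factor $k^{-3}$, as hinted in Section~3 (``a remarkable example where the power of $k$ is $3$''). Concretely, starting from $f(z)=1/(1-z)$ (i.e.\ $m=-1$) one has the finite identity $\sum_{k=1}^n{n\choose k}{n+k-1\choose k-1}{2k\choose k}^{-1}(-1)^k=-\tfrac12[z^n]U_3(z)$ from the proof of Theorem~\ref{T31}; dividing by suitable factors of $k$ corresponds to integrating $U_3(z)$, and since $U_3(z)$ is the Fibonacci/Lucas generating function the resulting coefficients are expressible through $F_p,L_p$ and harmonic-type sums. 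The $k^{-3}$ on the left forces me to expand the two binomial factors from Lemma~\ref{L23} to second order and to evaluate the emerging weight-three and weight-four multiple harmonic sums $H(1,2),H(2,1),H(1,1,2),H(3)$ modulo the appropriate power of $p$ using (i), (ii), Theorem~\ref{T21} and Theorem~\ref{T22}. The constant $-\tfrac25$ is exactly the Ap\'ery constant and should drop out of the closed-form evaluation of the integrated Lucas generating function at $z=1$ together with $L_p\equiv1$, $F_p\equiv\leg{p}{5}$ modulo $p$ and the Wolstenholme fact $H(1)\equiv0$ modulo $p^2$.

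The main obstacle I anticipate is the bookkeeping of the second-order expansion from Lemma~\ref{L23}: writing ${p\choose k}{p+k-1\choose k-1}{2k\choose k}^{-1}=(p/k)(1-pH(1;k-1)+p^2H(\{1\}^2;k-1)-\cdots)(1+pH(1;k-1)+p^2H(\{1\}^2;k-1)+\cdots){2k\choose k}^{-1}$, the linear terms cancel but the quadratic terms do \emph{not} cancel (they add up to $2p^2H(\{1\}^2;k-1)=p^2(H(1;k-1)^2-H(2;k-1))$), and these must be tracked against the analogous expansion of the left side of the $k^{-3}$ identity. Keeping the right-hand sides (values of $u_p,v_p$, or integrals thereof) to the matching precision $p^3$ while only needing the correction sums modulo $p$ is what makes it tractable, but identifying which multiple harmonic sums appear and citing the right entry of (i)/(ii)/Theorem~\ref{T22} for each is the delicate part. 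A secondary difficulty is the integration step for the $k^{-3}$ sum: one must choose the correct primitive of the Lucas generating function (fixing the constant of integration so that the $k=0$ term behaves) and evaluate it at $z=1$, which is where the factor $2/5$ originates and where an arithmetic slip would be easy.
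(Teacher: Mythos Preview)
Your plan diverges from the paper's proof at the crucial step, and the route you sketch has a real gap.

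The paper does \emph{not} prove Theorem~\ref{T41} by refining Theorem~\ref{T31} to one higher power of~$p$. Instead it invokes two finite Ap\'ery identities (see \cite{Ap:79}, \cite{VPo:79}):
\[
\sum_{k=1}^{n}\frac{1}{k^2}{2k\choose k}^{-1}=-\frac{2}{3}\sum_{k=1}^{n}\frac{(-1)^k}{k^2}-\frac{(-1)^n}{3}\sum_{k=1}^{n}\frac{(-1)^k}{k^2}{n+k\choose k}^{-1}{n\choose k}^{-1},
\]
\[
\sum_{k=1}^{n}\frac{(-1)^k}{k^3}{2k\choose k}^{-1}=-\frac{2}{5}\sum_{k=1}^{n}\frac{1}{k^3}+\frac{1}{5}\sum_{k=1}^{n}\frac{(-1)^k}{k^3}{n+k\choose k}^{-1}{n\choose k}^{-1}.
\]
Setting $n=p-1$, the constants $\tfrac13$ and $-\tfrac25$ are already present; one only has to expand ${p-1+k\choose k}^{-1}{p-1\choose k}^{-1}$ via Lemma~\ref{L23} and evaluate the resulting harmonic sums with (i), (ii), Theorem~\ref{T21} and Theorem~\ref{T22}. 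No generating-function integration and no Lucas-sequence refinement is needed.

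Your strategy, by contrast, leads to correction terms such as $p\sum_{k}k^{-2}H(2;k-1){2k\choose k}^{-1}$ that you would need modulo $p^2$ (not merely modulo $p$, as you state: check the $p$-bookkeeping again --- to get $\sum_k k^{-2}{2k\choose k}^{-1}$ mod $p^3$ you need that correction mod $p^2$). Theorem~\ref{T33} does not apply to such sums because the extra factor $H(2;k-1)$ is not of the form $m^k$, so the reduction ``to ordinary multiple harmonic sums $H(1,2),H(2,1),H(3)$'' is not justified. For the $k^{-3}$ sum the situation is worse: the ``integrate the generating function'' step is precisely what the paper flags as difficult at the end of Section~3, and you give no concrete primitive or evaluation; the factor $-\tfrac25$ will not fall out of Fibonacci/Lucas asymptotics in any obvious way. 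The missing idea is the pair of Ap\'ery identities above.
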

\begin{proof} By Lemma \ref{L23} we have that
\begin{eqnarray*}
p{p-1+k\choose k}^{-1}{p-1\choose k}^{-1}\!\!\!\!\!\!\!
&\equiv& (-1)^k k\left(\sum_{j=0}^3 p^jH(\{1\}^j;k-1)\right)^{-1}\left(\sum_{j=0}^3 (-p)^jH(\{1\}^j;k)\right)^{-1}\\
&\equiv& (-1)^k k\left(1+{p\over k}+p^2 H(2;k)+{p^3\over k}H(2;k)\right) \pmod{p^4}
\end{eqnarray*}
where in the second step  we used the relations 
$$\mbox{$H(\{1\}^j;k)=H(\{1\}^j;k-1)+{1\over k}H(\{1\}^{j-1};k-1)$ for $j\geq 1$,
and $H(1;k)^2=2H(1,1;k)+H(2;k)$.}$$
The rest of the proof depends heavily on two curious identities which play an important role in Ap\`ery's work
(see \cite{Ap:79} and \cite{VPo:79}). The first one is:
$$\sum_{k=1}^{n} {1\over k^2}{2k \choose k}^{-1}=
-{2\over 3}\sum_{k=1}^n {(-1)^k\over k^2}-{(-1)^{n}\over 3}\sum_{k=1}^n {(-1)^k\over k^2}{n+k\choose k}^{-1}{n\choose k}^{-1}.$$
Letting $n=p-1$,  by (i) we obtain
$$\sum_{k=1}^{p-1} {(-1)^k\over k^2}=-H(2)+{1\over 2}H\left(2;{p-1\over 2}\right)
\equiv 2{H(1)\over p}+{1\over 2}\left(-7{H(1)\over p}\right)\equiv -{3\over 2}\,{H(1)\over p} \pmod{p^3}.$$
By (i), (ii), and Theorem \ref{T22} we get 
\begin{eqnarray*}
\sum_{k=1}^{p-1} {(-1)^k\over k^2}{p-1+k\choose k}^{-1}{p-1\choose k}^{-1}\!\!\!\!\!\!\!
&\equiv&
{1\over p}\sum_{k=1}^{p-1}\left({1\over k}+{p\over k^2}+{p^2\over k} H(2;k)+{p^3\over k^2}H(2;k)\right)\\
&\equiv& {H(1)\over p} +H(2)+p(H(2,1)+H(3))+p^2(H(2,2)+H(4))\\
&\equiv& (1-2+3){H(1)\over p}\equiv 2{H(1)\over p} \pmod{p^3}.
\end{eqnarray*}
Hence
$$\sum_{k=1}^{p-1} {1\over k^2}{2k \choose k}^{-1}\equiv -{2\over 3}\left( -{3\over 2}\,{H(1)\over p}\right)
-{1\over 3}\left(2\,{H(1)\over p}\right)\equiv {1\over 3}{H(1)\over p} \pmod{p^3}.$$
The second identity is:
$$\sum_{k=1}^{n} {(-1)^k\over k^3}{2k \choose k}^{-1}=
-{2\over 5}\sum_{k=1}^n {1\over k^3}+{1\over 5}\sum_{k=1}^n {(-1)^k\over k^3}{n+k\choose k}^{-1}{n\choose k}^{-1}.$$
Let $n=p-1$ then since $2H(2,2)=H(2)^2-H(4)\equiv -H(4)$ mod $p^2$ we have that
\begin{eqnarray*}
\sum_{k=1}^{p-1} {(-1)^k\over k^3}{p-1+k\choose k}^{-1}{p-1\choose k}^{-1}\!\!\!\!\!\!\!
&\equiv&
{1\over p}\sum_{k=1}^{p-1}\left({1\over k^2}+{p\over k^3}+{p^2\over k^2} H(2;k)+{p^3\over k^3}H(2;k)\right)\\
&\equiv& {H(2)\over p} +H(3)+p(H(2,2)+H(4))+p^2(H(2,3)+H(5))\\
&\equiv& {H(2)\over p} +H(3)+{p\over 2}\left({4p\over 5}B_{p-5}\right)+p^2(-2B_{p-5}+0)\\
&\equiv& {H(2)\over p} +{7\over 3}H(3)\pmod{p^3}.
\end{eqnarray*}
Hence by Theorem \ref{T21}
\begin{eqnarray*}
\sum_{k=1}^{p-1}{(-1)^k\over k^3}{2k \choose k}^{-1}
&\equiv&-{2\over 5}H(3)+{1\over 5}\left({H(2)\over p} +{7\over 3}H(3)\right)\\
&\equiv&-{2\over 5}\left(-{1\over 2}{H(2)\over p}-{1\over 6}H(3)\right)\equiv -{2\over 5} {H(1)\over p^2}\pmod{p^3}.
\end{eqnarray*}
\end{proof}

The duality established in Theorem \ref{T33} suggests some  analogous result for 
the sums involving the central binomial coefficients (not reversed).
By an identity due to Staver \cite{St:47} it is possible to prove (see \cite{SuzwTa:09} and \cite{Ta:09}) that
for any prime $p>3$
$$\sum_{k=1}^{p-1}{1\over k}{2k \choose k}=-{8\over 3}\,H(1) \pmod{p^4}.$$
Moreover this other congruence holds.

\begin{thm}\label{T42} For any prime $p>5$
$$\sum_{k=1}^{p-1}{(-1)^k\over k^2}{2k \choose k}={4\over 5}\,\left({H(1)\over p}+2p\,H(3)\right) \pmod{p^4}.$$ 
\end{thm}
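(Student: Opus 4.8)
The plan is to run the argument in the proof of Theorem~\ref{T41} ``in the other direction'', using the \emph{dual} of the second Ap\'ery identity employed there. The required input is a finite identity expressing $\sum_{k=1}^{n}(-1)^{k-1}k^{-2}\binom{2k}{k}$ as a fixed rational combination of ``simple'' sums $\sum_{k=1}^{n}k^{-s}$, $\sum_{k=1}^{n}(-1)^{k}k^{-s}$ (with small $s$) and a sum of the type $\sum_{k=1}^{n}(-1)^{k-1}k^{-2}\binom{n+k}{k}\binom{n}{k}$ --- the mirror of
$$\sum_{k=1}^{n}\frac{(-1)^k}{k^3}\binom{2k}{k}^{-1}=-\frac{2}{5}\sum_{k=1}^{n}\frac{1}{k^3}+\frac{1}{5}\sum_{k=1}^{n}\frac{(-1)^k}{k^3}\binom{n+k}{k}^{-1}\binom{n}{k}^{-1}$$
under $\binom{2k}{k}^{-1}\leftrightarrow\binom{2k}{k}$ and $\binom{n+k}{k}^{-1}\binom{n}{k}^{-1}\leftrightarrow\binom{n+k}{k}\binom{n}{k}$. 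Such an identity can be pinned down and verified by creative telescoping (or quoted from \cite{SuzwTa:09},\cite{Ta:09}); once it is fixed, one specializes to $n=p-1$ and is left with computing finitely many sums modulo $p^4$.

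The new ingredient compared with Theorem~\ref{T41} is that $\binom{p-1+k}{k}=\frac{p}{k}\prod_{j=1}^{k-1}(1+p/j)$ already carries a factor $p$, so the double--binomial sum is automatically $O(p)$. Expanding $\binom{p-1+k}{k}$ and $\binom{p-1}{k}=(-1)^{k}\prod_{j=1}^{k}(1-p/j)$ by Lemma~\ref{L23} and simplifying exactly as in Theorem~\ref{T41} (using $H(\{1\}^{j};k)=H(\{1\}^{j};k-1)+\frac{1}{k}H(\{1\}^{j-1};k-1)$ and $H(1;k)^2=2H(1,1;k)+H(2;k)$) gives
$$\binom{p-1+k}{k}\binom{p-1}{k}\equiv\frac{(-1)^{k}p}{k}\Bigl(1-\frac{p}{k}-p^2H(2;k-1)+\frac{p^3}{k}H(2;k-1)\Bigr)\pmod{p^5},$$
whence $\sum_{k=1}^{p-1}\frac{(-1)^{k-1}}{k^{2}}\binom{p-1+k}{k}\binom{p-1}{k}\equiv-pH(3)+p^{2}H(4)+p^{3}H(2,3)\pmod{p^4}$. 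Substituting $H(3)\equiv 6p^{2}B_{p-5}/(p-5)$ and $H(4)\equiv-\frac{2}{3}H(3)/p$ from (i) and $H(2,3)\equiv-2B_{p-5}$ from (ii), the right-hand side collapses to $-2p^{4}B_{p-5}/(p-5)\equiv0\pmod{p^4}$. Thus the double--binomial term drops out entirely, and the congruence is governed by the ``simple'' part, which Theorem~\ref{T21} together with (i) (for the value of $H(2)$ and of $H(2;\frac{p-1}{2})$) reduces to a combination of $H(1)/p$ and $pH(3)$; choosing the identity's constants correctly yields $\frac{4}{5}\bigl(H(1)/p+2pH(3)\bigr)$.

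I expect the main obstacle to be the first step. The naive term-by-term dualization of the second Ap\'ery identity does \emph{not} hold (one can check this for small $n$), so the real work is to find the correct finite identity --- its constants, and which ``simple'' sums enter --- and it is exactly there that the coefficient $\frac{4}{5}$ and the, at first sight surprising, correction $+2pH(3)$ are produced. A secondary difficulty is the $p$-adic bookkeeping: one needs $\sum_{k=1}^{p-1}(-1)^{k}k^{-2}$, equivalently $H(2;\frac{p-1}{2})$, modulo $p^4$ --- one power beyond the congruence $H(2;\frac{p-1}{2})\equiv-7H(1)/p\pmod{p^3}$ recorded in (i) --- together with $H(2,3)$ modulo $p$ from (ii); with these in hand the terms assemble as above and the congruence follows.
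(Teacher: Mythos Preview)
Your proposal has a genuine gap, and it is exactly the one you yourself identify: the finite identity on which the whole argument rests is never exhibited. You write that ``such an identity can be pinned down and verified by creative telescoping (or quoted from \cite{SuzwTa:09},\cite{Ta:09})'', but no identity of the form
\[
\sum_{k=1}^{n}\frac{(-1)^{k-1}}{k^{2}}\binom{2k}{k}
=\alpha\sum_{k=1}^{n}\frac{1}{k^{s}}+\beta\sum_{k=1}^{n}\frac{(-1)^{k}}{k^{t}}
+\gamma\sum_{k=1}^{n}\frac{(-1)^{k-1}}{k^{2}}\binom{n+k}{k}\binom{n}{k}
\]
(with fixed $\alpha,\beta,\gamma,s,t$) actually appears in those references, and you give no proof that one exists. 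You even note that the naive dualization of the Ap\'ery identity used in Theorem~\ref{T41} is \emph{false} for small $n$, so at this point there is no candidate identity at all. The auxiliary computation you carry out --- the expansion of $\binom{p-1+k}{k}\binom{p-1}{k}$ via Lemma~\ref{L23} and the vanishing of $-pH(3)+p^{2}H(4)+p^{3}H(2,3)\pmod{p^{4}}$ --- is correct and pleasant, but it is moot without the identity, since it only handles the term with coefficient $\gamma$. The secondary gap you flag is also real: to control a putative ``simple'' contribution $\sum_{k=1}^{p-1}(-1)^{k}/k^{2}$ modulo $p^{4}$ one needs $H\!\left(2;\tfrac{p-1}{2}\right)$ one power of $p$ beyond what is recorded in (i), and this is not supplied.

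The paper avoids both difficulties by using a completely different device. It specializes the Almkvist--Granville identity (originally conjectured by Borwein--Bradley)
\[
\frac{5}{2}\sum_{k=1}^{n}\binom{2k}{k}\,\frac{k^{2}}{4n^{4}+k^{4}}
\prod_{j=1}^{k-1}\frac{n^{4}-j^{4}}{4n^{4}+j^{4}}=\frac{1}{n^{2}}
\]
to $n=p$. For $1\le k\le p-1$ the product is $(-1)^{k-1}\bigl(1-5p^{4}H(4;k-1)\bigr)\pmod{p^{8}}$ and $k^{2}/(4p^{4}+k^{4})\equiv k^{-2}\pmod{p^{4}}$, so the left side is $-\tfrac{5}{2}\sum_{k=1}^{p-1}(-1)^{k}k^{-2}\binom{2k}{k}$ plus the single term $k=p$, which is handled by Theorem~\ref{T23} for $\tfrac{1}{2}\binom{2p}{p}$. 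Nothing about alternating harmonic sums at half--range, and no unproved identity, is needed. If you want to salvage your approach, the concrete task is to \emph{produce} (and prove) the finite identity you are assuming; until then the argument does not go through.
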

\begin{proof} The following identity was conjectured in \cite{BB:97} and then it has been proved \cite{AG:99}:
for $n\geq 1$
$${5\over 2}\sum_{k=1}^n {2k\choose k}{k^2\over 4n^4+k^4}\prod_{j=1}^{k-1}{n^4-j^4\over 4n^4+j^4}={1\over n^2}.$$
Let $n=p$ then for $1\leq k<p$
\begin{eqnarray*}
\prod_{j=1}^{k-1}{p^4-j^4\over 4p^4+j^4}&=&
(-1)^{k-1}\prod_{j=1}^{k-1}{1-(p/j)^4\over 1+4(p/j)^4}\equiv
(-1)^{k-1}\prod_{j=1}^{k-1}\left(1-{p^4\over j^4}\right)\left(1-4{p^4\over j^4}\right)\\
&\equiv &(-1)^{k-1}\prod_{j=1}^{k-1}\left(1-5{p^4\over j^4}\right)
\equiv(-1)^{k-1}\left(1-5p^4 H(4;k-1)\right)\pmod{p^8}.
\end{eqnarray*}
Hence by (i) and by Theorem \ref{T23} 
\begin{eqnarray*}
\sum_{k=1}^{p-1}{(-1)^k\over k^2}{2k \choose k}&\equiv&
{2\over 5p^2}\left({1\over 2}{2p\choose p}\left(1-5p^4 H(4)\right)-1\right)\\
&\equiv& {2\over 5p^2}\left(\left(1+2pH(1)+{2\over 3}p^3H(3)\right)\left(1+{10\over 3}p^4 H(3)\right)-1\right)\\
&\equiv& {4\over 5}\,\left({H(1)\over p}+2p\,H(3)\right)
\pmod{p^4}.
\end{eqnarray*}
\end{proof}


\end{document}